\documentclass[a4paper]{amsart}
\usepackage{graphicx}
\usepackage{amssymb}
\usepackage{enumerate}
\usepackage{lmodern}
\usepackage[utf8]{inputenc}
\usepackage[T1]{fontenc}
\usepackage{hyperref}

%\usepackage{tikz}
%\usetikzlibrary{matrix,arrows,decorations.pathmorphing,decorations.markings}
%\usepackage{tikz-cd}

\theoremstyle{plain}
  \newtheorem{thm}{Theorem}
  \newtheorem{defn}{Definition}
  \newtheorem{prop}{Proposition}

\theoremstyle{definition}
  \newtheorem{example}{Example}
  \newtheorem{rem}{Remark}

\newcommand{\mc}{\mathcal}

\newcommand{\mf}{\mathfrak}
\newcommand{\on}{\operatorname}

\newcommand{\g}{\mathfrak{g}}
\newcommand{\dd}{\mathfrak{d}}

\newcommand{\la}{\langle}
\newcommand{\ra}{\rangle}
\newcommand{\R}{\mathbb{R}}
\newcommand{\h}{\mf h}

\title{Poisson-Lie T-duality and Courant algebroids}
\author{Pavol \v{S}evera}
\address{Department of Mathematics, Universit\'{e} de Gen\`{e}ve, Geneva, Switzerland}
\email{pavol.severa@gmail.com}
\thanks{Supported in part by  the grant MODFLAT of the European Research Council and the NCCR SwissMAP of the Swiss National Science Foundation.}

\begin{document}
\maketitle

\section{Introduction}
This note explains Poisson-Lie T-duality from the point of view of Courant algebroids. It contains basically nothing new: all the material is already contained in my letters \cite{letters} to Alan Weinstein written in 1998-99, which circulated in the ``Poisson community'' (including, among others, Anton Alekseev, Paul Bressler, Yvette Kosmann-Schwarzbach and Ping Xu) for some time. 

During the 16 years since the letters were written, the basic technical tools (e.g.\ reduction of Courant algebroids \cite{bur}) were rediscovered and works  linking (Abelian) T-duality and Courant algebroids appeared, notably the paper by Cavalcanti and Gualtieri \cite{cg}. I still decided to write my account  and include details missing in \cite{letters}. Perhaps the most important reason is that I introduced exact Courant algebroids while trying to understand Poisson-Lie T-duality, and I believe that this duality, first introduced in \cite{ks}, which generalized the usual Abelian T-duality, is essential for understanding of both Courant algebroids and of the world of T-dualities.

This note summarizes the first four letters of \cite{letters}. In particular, in doesn't deal with differential graded symplectic geometry and its link with Courant algebroids, which is discussed in the remaining letters. While it's certainly relevant for Poisson-Lie T-duality, I decided to exclude it to keep the focus on one thing, and also because I already wrote about it in \cite{some}.

\section{Exact Courant algebroids}
Courant algebroids and Dirac structures were introduced by Liu, Weinstein and Xu in \cite{lwx}.
\begin{defn}
A \emph{Courant algebroid (CA)} is a vector bundle $E\to M$ equipped with a non-degenerate quadratic form $\la\,,\,\ra$, with a bundle map 
$$a:E\to TM$$ (the \emph{anchor map}) and with a $\R$-bilinear map 
$$[\,,\,]:\Gamma(E)\times\Gamma(E)\to\Gamma(E)$$
satisfying
\begin{enumerate}[  A: ]
\item $[s,[t,u]]=[[s,t],u]+[t,[s,u]]$ for any $s,t,u\in\Gamma(E)$ \label{ax1}
\item $a([s,t])=[a(s),a(t)]$ for any $s,t\in\Gamma(E)$ \label{ax2}
\item $[s,ft]=f[s,t]+(a(s)f)t$ for any $s,t\in\Gamma(E)$, $f\in C^\infty(M)$ \label{ax3}
\item $a(s)\la t,u\ra=\la [s,t],u\ra+\la t,[s,u]\ra$ \label{ax4}
\item $[s,s]=a^t\bigl(d\la s,s\ra/2\bigr)$, where \label{ax5}
$a^t:T^*M\to E^*\xrightarrow{\la,\ra} E$
is the transpose of $a$.
\end{enumerate}
A \emph{Dirac structure} in $E$ is a subbundle $L\subset E$ which is Lagrangian w.r.t\ $\la,\ra$ (i.e.\ $L^\perp=L$) such that $\Gamma(L)$ is closed under $[,]$.
\end{defn}
\begin{rem}
This definition from \cite{letters} is somewhat simpler than the (equivalent) original definition of Liu-Wenstein-Xu \cite{lwx}, who used the skew-symmetric part of $[,]$.
\end{rem}
Axiom \ref{ax5} can be replaced by the more innocent-looking
$$
\la s,[t,t]\ra=\la [s,t],t\ra.
$$
Axioms  \ref{ax1}--\ref{ax4} are equivalent to the following: every section $s\in\Gamma(E)$ induces a vector field $Z_s$ on $E$ over $a(s)$, such that the flow of $Z_s$ preserves all the structure; the bracket $[s,s']$ is the Lie derivative of $s'$ under this flow. The map $s\mapsto Z_s$ is $\R$-linear. We have $[Z_s,Z_{s'}]=Z_{[s,s']}$ (as follows from axiom \ref{ax1}). 
\begin{example}[\cite{lwx}]
If $M$ is a point then $E$ is a Lie algebra with invariant non-degenerate quadratic form $\la\,,\,\ra$.
\end{example}
\begin{example}[\cite{lwx}]
If $M$ is a manifold then $E=(T\oplus T^*)M$, with
\begin{subequations}
\begin{align}
\la(u,\alpha),(v,\beta)\ra&=\alpha(v)+\beta(u),\\
a(u,\alpha)&=u,\\
[(u,\alpha),(v,\beta)]&=([u,v],L_u\beta-i_v d\alpha)\label{eq:std}
\end{align}
\end{subequations}
is  the \emph{standard} CA over $M$. In this case $Z_{(u,0)}$ is the natural lift of $u$ to (the natural bundle) $(T\oplus T^*)M$, and $Z_{(0,\alpha)}$ is the vertical vector field with value $-i_vd\alpha$ at $(v,\beta)\in (T\oplus T^*)M$. 
\end{example}

If  $L\subset E$ is a Lagrangian vector subbundle of a CA (i.e.\ if $L^\perp=L$), we can measure the non-involutivity of $L$ (i.e.\ its failure to be a Dirac structure) by
$$\mc F_L:\textstyle\bigwedge^2L\to E/L\cong L^*,\quad \mc F_L(s,t)=[s,t]\text{ mod }L\quad(\forall s,t\in\Gamma(L))$$
where the isomorphism $E/L\cong L^*$ is given by $\la,\ra$, or equivalently by
$$\mc H_L\in\Gamma(\wedge^3L^*),\quad \mc H_L(s,t,u)=\la [s,t],u\ra\quad(\forall s,t,u\in\Gamma(L))$$
(the fact that $\mc F_L$ and $\mc H_L$ are well-defined is readily verified; even though $\mc F_L$ and $\mc H_L$ are really the same object, it will be convenient to have a separate notation).
$L$ is a Dirac structure iff $\mc F_L=0$ (or $\mc H_L=0$).

If $E\to M$ is a CA with anchor map $a$ then
   $a\circ a^t=0$ (as follows from axioms \ref{ax5} and  \ref{ax2}), i.e.
\begin{equation}\label{eq:exact}
0\to T^*M\xrightarrow{a^t} E\xrightarrow{a} TM\to 0
\end{equation}
is a chain complex.
\begin{defn}
A Courant algebroid $E\to M$ is \emph{exact} if \eqref{eq:exact} is an exact sequence. 
\end{defn}

The simplest example of an exact CA is the standard CA; as we shall see below, every exact CA is locally standard.

\begin{example}\label{ex:D/G}
Let $D$ be a Lie group and $G\subset D$ a closed subgroup. Let the Lie algebra $\dd$ of $D$ be equipped with a $Ad$-invariant non-degenerate quadratic form $\la\,,\,\ra_\dd$ such that $\g^\perp=\g$. Then $E=\dd\times D/G$ is an exact Courant algebroid over $D/G$. For constant sections of $E$ the bracket is the Lie bracket on $\dd$ and the anchor is the action of $\dd$ on the homogeneous space $D/G$.

If $\h\subset\dd$ is another Lagrangian Lie subalgebra, i.e.\ if $\h$ is a Dirac structure in $\dd$, then $\h\times D/G\subset \dd\times D/G$ is a Dirac structure.
\end{example}

Exact CAs can be classified in the following way. If $E$ is an exact $CA$ then there is a Lagrangian subbundle $L\subset A$ such that $a|_L:L\to TM$ is an isomorphism (as can be seen by a partition of unity argument). We shall call such a subbundle $L\subset E$ a \emph{connection} in $E$. Equivalently, a connection can be described as a splitting $\sigma:TM\to E$ of the exact sequence \eqref{eq:exact}, such that its image $L$ is Lagrangian.
 Connections  form an affine space over $\Omega^2(M)$ (if $\tau\in\Omega^2(M)$ then $(\tau+\sigma)(v):=\sigma(v)+a^t(i_v\tau)$). 

If $L$ is a connection then one can easily see that 
$$H(u,v,w):=\la[\sigma(u),\sigma(v)],\sigma(w)\ra$$
defines a closed 3-form $H\in\Omega^3(M)$ (if we identify $TM$ with $L$ then $H=\mc H_L$); the 3-form $H$ is called the \emph{curvature} of the connection $L$. If we use  $\sigma\oplus a^t$ to identify $E$ with $TM\oplus T^*M$ then its anchor $a$ and pairing $\la,\ra$ are as in the standard CA, and the bracket is
\begin{equation}\label{eq:exH}
[(u,\alpha),(v,\beta)]=([u,v],L_u\beta-i_v d\alpha+H(u,v,\cdot)).
\end{equation}
On the other hand, for any closed $H$ the bracket \eqref{eq:exH} makes $TM\oplus T^*M$ to an exact Courant algebroid. If we change $\sigma$ by a 2-form $\tau\in\Omega^2(M)$ then $H$ gets replaced by $H+d\tau$. As a result, we have the following theorem:
\begin{thm}[classification of exact CAs]
Exact Courant algebroids over $M$ are classified by $H^3(M,\R)$; exact Courant algebroids with a chosen connection are classified by closed $3$-forms $H$, with the bracket given by \eqref{eq:exH}.
\end{thm}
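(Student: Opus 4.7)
\emph{Plan.} Much of the argument is laid out in the paragraphs preceding the theorem; my task is to fill in the verifications. The strategy is to pick a connection $\sigma:TM\to E$ (which exists by a partition-of-unity argument) and transport every piece of structure on $E$ to $TM\oplus T^*M$ via the isomorphism $\sigma\oplus a^t$.

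First I would check that $H(u,v,w):=\la[\sigma(u),\sigma(v)],\sigma(w)\ra$ is a closed $3$-form. Tensoriality in $w$ is immediate from $C^\infty$-bilinearity of the pairing; tensoriality in $u$ follows from the Leibniz rule derived from axioms \ref{ax3} and \ref{ax5}, whose correction terms vanish thanks to the Lagrangian property $\la\sigma(u),\sigma(v)\ra=0$. Total antisymmetry comes from axioms \ref{ax4} and \ref{ax5} applied to pairs of sections of $L=\sigma(TM)$. Closedness of $H$ is then the Loday-Jacobi identity \ref{ax1} applied to three sections of $L$, with axiom \ref{ax4} used to convert inner brackets into Lie derivatives along the anchor.

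Next I would verify the bracket formula \eqref{eq:exH}. Decompose a section of $E$ as $\sigma(v)+a^t(\beta)$ and expand $[\sigma(u)+a^t(\alpha),\sigma(v)+a^t(\beta)]$ using axioms \ref{ax2}--\ref{ax5}. The $\sigma(TM)$-component is $\sigma([u,v])$ by axiom \ref{ax2}; the $a^t(T^*M)$-component is determined by its pairing with $\sigma(w)$, which after using \ref{ax4} and \ref{ax5} produces exactly $L_u\beta-i_v d\alpha+H(u,v,\cdot)$. Conversely, for any closed $3$-form $H$ one checks that \eqref{eq:exH} satisfies the five axioms on $TM\oplus T^*M$; only axiom \ref{ax1} uses closedness, and it is the most tedious step.

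Finally, if the connection is replaced by $\sigma':=\sigma+a^t\circ i_\bullet\tau$ for $\tau\in\Omega^2(M)$, then computing the new curvature via \eqref{eq:exH} directly yields $H+d\tau$. Together with the previous steps this establishes a bijection between pairs $(E,\sigma)$ and closed $3$-forms, and by quotienting out the $\Omega^2(M)$-action on connections a bijection between isomorphism classes of exact Courant algebroids over $M$ and $H^3(M,\R)$. The main obstacle, should one grind through all details, is the Loday-Jacobi identity \ref{ax1} for \eqref{eq:exH}: many terms appear, and $dH=0$ is exactly the condition that makes them cancel.
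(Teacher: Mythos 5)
Your proposal is correct and follows exactly the route the paper takes (the paper's ``proof'' is the discussion preceding the theorem statement): choose a Lagrangian splitting $\sigma$, show its curvature $H$ is a closed $3$-form, transport the bracket to $TM\oplus T^*M$ to obtain \eqref{eq:exH}, check the converse, and observe that changing $\sigma$ by $\tau\in\Omega^2(M)$ changes $H$ by $d\tau$. The verifications you indicate (tensoriality and antisymmetry of $H$ via the Lagrangian condition, closedness and axiom \ref{ax1} as the only places where $dH=0$ enters) are precisely the details the paper leaves to the reader.
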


%\begin{example}
%Let us consider the exact CA $E=\dd\times D/G$ of Example \ref{ex:D/G}. Notice that $p:D\to D/G$ is a principal $G$-bundle. If we use the right trivialization $TD\cong \dd\times D$ then $E\cong(TD)/G$, and a connection $L\subset E$ is the same as a connection $A\in\Omega^1(D,\g)$ on the principal $G$-bundle $p:D\to D/G$ with Lagrangian horizontal subspaces. If $\theta\in\Omega^1(D,\dd)$ is the left-invariant Maurer--Cartan form, then we can get the  curvature of $L$ as
%\begin{equation}\label{eq:curv-D/G}
%p^* H= c-d\la\theta\wedge A\ra,
%\end{equation}
%where $c\in\Omega^3(D)$ is the bi-invariant 3-form given at the origin by
%$c(u,v,w)=\la[u,v],w\ra$.
%
%If we choose a connection $A$ on $P$ which doesn't have isotropic horizontal spaces then \eqref{eq:curv-D/G} still defines a closed 3-form on $D/G$. This corresponds to splittings $T(D/G)\to E$ with non-Lagrangian image. If we add to such a splitting a section of $S^2 T^*(D/G)$ then \eqref{eq:curv-D/G} doesn't change (as $\la\theta\wedge  A\ra$ doesn't change), and there is a unique section of $S^2 T^*(D/G)$ that will make the images of the new splitting isotropic.
%\end{example}

 If $E$ is the exact CA given by \eqref{eq:exH}, and $L\subset E$ a Dirac structure, then on each integral leaf $N\subset M$ of $a(L)\subset TM$ we have a 2-form $\alpha_N$ satisfying $d\alpha_N=H|_N$; the integral leaves and the 2-forms determine $L$ uniquely. 
 %The graph of a bivector field $\pi$ (seen as a map $T^*M\to TM$) is a Dirac structure in the standard CA $(T\oplus T^*)M$ iff $\pi$ is a Poisson structure; likewise, the graph of a 2-form is a Dirac structure iff the 2-form is closed.

\section{Exact CAs and 2-dimensional variational problems}\label{sec:var}
Let $\Sigma$ be an oriented surface, $M$ a manifold and $\omega\in\Omega^2(M)$ a 2-form.
Let us consider the functional $S$ on maps $f:\Sigma\to M$ given by
\begin{equation}\label{eq:S}
S[f]=\int_\Sigma f^*\omega.
\end{equation}
 We shall consider more general functionals in Remark \ref{rem:lagr} below; recall, however, that any local functional can be replaced by \eqref{eq:S} if we replace $M$ by an appropriate jet space (the de Donder-Weyl (=multisymplectic) method).

A map $f:\Sigma\to M$ is critical w.r.t.\ $S$ iff
\begin{equation*}
f^*(i_ud\omega)=0
\end{equation*}
for every vector field $u$ on $M$. If $\tau\in\Omega^2(M)$ is closed then $\omega$ and $\omega+\tau$ give equivalent variational problems. More generally, if $H\in\Omega^3_{cl}(M)$ is a closed 3-form, we can consider maps $f:\Sigma\to M$ satisfying
\begin{equation}\label{eq:EL}
f^*(i_u H)=0
\end{equation}
and call them \emph{critical} (or \emph{$H$-critical}).
As $H$ is locally of the form $d\omega$, we can still see this equation as a solution of a variational problem. 

\begin{rem}
From quantum point of view, to make the path integral formally meaningful, one needs to upgrade $H$ to a Cheeger-Simons differential character, or equivalently to a class in the smooth Deligne cohomology \cite{gaw}.
\end{rem}

Let us now consider the exact CA $E\to M$ with connection $L\subset E$ such that its curvature is $H$. (We can set $E=(T\oplus T^*)M$ with the bracket \eqref{eq:exH} and $L=TM$; if $H=d\omega$ we can equivalently take the standard CA and set $L$ to be the graph of $\omega:TM\to T^*M$). For any map $f:\Sigma\to M$ let
$$\tilde Tf:T\Sigma\to L$$
be the lift of the tangent map $Tf:T\Sigma\to TM$ given by $a\circ\tilde Tf=Tf$. The map $\tilde Tf$ can be used to pull back sections of $\bigwedge L^*$ to differential forms on $\Sigma$; this pullback will be denoted by $f^{\tilde*}$.

The Euler-Lagrange equation \eqref{eq:EL} can be rephrased as a `zero-curvature condition'.
\begin{prop}\label{prop:crit}
A map $f:\Sigma\to M$ is critical iff
$$f^{\tilde*}(\mc F_L)=0\ (\in\Omega^2\bigl(\Sigma,f^*(E/L)\bigr)).$$
\end{prop}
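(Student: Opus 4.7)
The plan is to work in a convenient explicit model. By the classification theorem, I may take $E=(T\oplus T^*)M$ with bracket \eqref{eq:exH} and $L=TM\subset E$ (the image of $u\mapsto(u,0)$), which is a connection of curvature $H$. The identification $E/L\cong L^*=T^*M$ induced by $\la,\ra$ sends the class of $(v,\beta)$ to $\beta$, because $\la(v,\beta),(u,0)\ra=\beta(u)$. Any other choice of connection differs by a 2-form shift, and both sides of the claimed equality transform consistently under such shifts, so there is no loss of generality in working in this model.

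With this setup I would compute $\mc F_L$ directly from \eqref{eq:exH}. For $u,v\in\Gamma(L)$,
$$[(u,0),(v,0)]=([u,v],H(u,v,\cdot)),$$
so $\mc F_L(u,v)=H(u,v,\cdot)\in T^*M$. Since $a|_L$ is the obvious isomorphism $L\to TM$, the lift is simply $\tilde Tf(\xi)=(Tf(\xi),0)$, and pulling back gives
$$(f^{\tilde*}\mc F_L)(\xi,\eta)=H(Tf(\xi),Tf(\eta),\cdot)\in T^*_{f(p)}M\cong\bigl(f^*(E/L)\bigr)_p.$$
Therefore the condition $f^{\tilde*}\mc F_L=0$ is equivalent to $H(Tf(\xi),Tf(\eta),w)=0$ for all $\xi,\eta\in T_p\Sigma$ and all $w\in T_{f(p)}M$.

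Finally I would compare this with \eqref{eq:EL}. For a vector field $u$ on $M$,
$$f^*(i_uH)(\xi,\eta)=H\bigl(u(f(p)),Tf(\xi),Tf(\eta)\bigr),$$
which, by the antisymmetry of $H$, differs only by a sign from $H(Tf(\xi),Tf(\eta),u(f(p)))$. As $u(f(p))$ can be prescribed to be any vector in $T_{f(p)}M$, the vanishing of $f^*(i_uH)$ for every vector field $u$ is precisely the pointwise vanishing of $H(Tf(\xi),Tf(\eta),w)$ for every $w$, which is what was extracted from $f^{\tilde*}\mc F_L=0$. The only delicate point is the tensoriality of $\mc F_L$, needed so that $f^{\tilde*}\mc F_L$ is well-defined pointwise; this is noted earlier in the paper and is immediate from \eqref{eq:exH} in the present model.
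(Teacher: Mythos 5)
Your proof is correct and follows exactly the route the paper intends: the paper states Proposition \ref{prop:crit} without an explicit proof, but the preceding paragraph sets up precisely your model ($E=(T\oplus T^*)M$ with bracket \eqref{eq:exH}, $L=TM$), in which $\mc F_L(u,v)=H(u,v,\cdot)$ and the claim reduces to the antisymmetry of $H$ as you show. The reduction to this model via the classification theorem and the remark on tensoriality of $\mc F_L$ are both handled appropriately.
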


The importance of $E$ is that its sections, rather than just vector fields on $M$, can be interpreted as symmetries and give rise to conservation laws.
\begin{thm}[``Noether'']\label{thm:noe}
If $s\in\Gamma(E)$ is such that the flow of $Z_s$ preserves $L$ then for any critical map $f:\Sigma\to M$ the 1-form $f^{\tilde*}\la s,\cdot\ra\in\Omega^1(\Sigma)$ satisfies
$$d\bigl(f^{\tilde*}\la s,\cdot\ra\bigr)=0.$$
\end{thm}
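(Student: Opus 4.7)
The plan is to reduce to the concrete standard picture \eqref{eq:exH} using the given $L$ itself as the connection. Since $a|_L\colon L\to TM$ is an isomorphism, $L$ is a connection in the sense of the text, so the classification theorem provides an identification $E\cong TM\oplus T^*M$ under which $L=TM\oplus 0$, the pairing is the standard one, and the bracket takes the form \eqref{eq:exH} with $H=\mc H_L$. In this trivialization I would write $s=(X,\alpha)$ with $X\in\Gamma(TM)$ and $\alpha\in\Omega^1(M)$.

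I would then decode the hypothesis that the flow of $Z_s$ preserves $L$. Because $[s,\cdot]$ is a Lie derivative along $Z_s$, preservation of $L$ is equivalent to $[s,\Gamma(L)]\subset\Gamma(L)$, and by the Leibniz axiom \ref{ax3} this is a pointwise condition: it suffices to test on sections of the form $(v,0)$. Reading off the $T^*M$-component of \eqref{eq:exH} gives $i_v\bigl(-d\alpha+i_X H\bigr)$, so the hypothesis is exactly the identity $d\alpha=i_X H$.

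The remaining identifications are then routine. Since $a|_L$ is the identity under our trivialization, $\tilde Tf$ coincides with $Tf$; and for $v\in L=TM$, $\la s,(v,0)\ra=\alpha(v)$, so the section $\la s,\cdot\ra\in\Gamma(L^*)$ is just the 1-form $\alpha\in\Omega^1(M)$. Hence $f^{\tilde *}\la s,\cdot\ra=f^*\alpha$, and
$$d\bigl(f^{\tilde *}\la s,\cdot\ra\bigr)=f^*(d\alpha)=f^*(i_X H)=0,$$
where the last equality is the critical condition \eqref{eq:EL} applied to $u=X$.

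The main (essentially only) non-bookkeeping step will be the equivalence ``$Z_s$ preserves $L$ iff $d\alpha=i_X H$'': this is where one must carefully match a global Courant-algebroid symmetry condition with an algebraic identity on $(X,\alpha)$, via the explicit bracket \eqref{eq:exH}. Once that is in place, the theorem collapses to a one-line calculation together with \eqref{eq:EL}.
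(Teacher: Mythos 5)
Your proposal is correct and follows essentially the same route as the paper: trivialize $E$ as $(T\oplus T^*)M$ via the connection $L$, translate ``$Z_s$ preserves $L$'' into the identity $d\alpha=\pm i_XH$ using the bracket \eqref{eq:exH}, and conclude from $f^*(i_XH)=0$. (Your sign $d\alpha=i_XH$ versus the paper's $d\alpha+i_uH=0$ is only a contraction convention for $H(u,v,\cdot)$ and is immaterial, since either way $f^*(d\alpha)=\pm f^*(i_XH)=0$.)
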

\begin{proof}
We identify $E$ with $(T\oplus T^*)M$ using the connection $L$; the bracket on $E$ is then \eqref{eq:exH}  and $L=TM$. If $s=(u,\alpha)$ then  $f^{\tilde*}\la s,\cdot\ra=f^*\alpha$. The flow of $Z_s$ preserves $L$ iff $$d\alpha+i_uH=0.$$
If $f$ is critical then $f^*(i_uH)=0$. We thus get $d\bigl(f^{\tilde*}\la s,\cdot\ra\bigr)=f^*(d\alpha)=0$.
\end{proof}

The main theme of this paper is the study of symmetries that in place of closed 1-forms give rise to flat connection. The fact that Euler-Lagrange equations can be seen as a zero-curvature condition (Proposition \ref{prop:crit}) will play an important role. To explain it we will need equivariant exact CAs, which we introduce in the following section.

%\begin{rem}
%Let us rephrase the proof of Theorem \ref{thm:noe} in a slightly longer, but perhaps more enlightening way.
%We can work locally in $M$ and thus suppose that $H=d\omega$ (i.e.\ that $H$ is exact). Let $E$ be the standard $CA$ and $L\subset E$ be the graph of $\omega$. If $s=(u,\alpha)$ then the flow of $Z_s$ preserves $L$ iff $L_u\omega+d\alpha=0$. This means that $u$ ``preserves $\omega$ up to a total divergence''; the component $\alpha$ of $(u,\alpha)$ takes care of the total divergence. As  $f^{\tilde*}\la s,\cdot\ra=f^*(i_u\omega+\alpha)$ for any $f:\Sigma\to M$, if $f$ is critical then
%$$d(f^{\tilde*}\la s,\cdot\ra)=f^*(d(i_u\omega)+d\alpha)=f^*(L_u\omega+d\alpha)-f^*(i_ud\omega)=0.$$
%\end{rem}

\begin{rem}
If we considered 1-dimensional variational problems instead of 2-dimensional then exact CAs would be replaced by Lie algebroids $A\to M$ which are extensions $0\to\R\to A\to TM\to0$. A splitting of this extension, i.e.\ a connection in $A$, gives rise to a closed 2-form (the curvature of the connection). To make formal sense of the path integral we would rather need a principal $U(1)$-bundle $P\to M$ with a connection; in this case $A=(TP)/U(1)$.

In the case of 2-dimensional problems principal $U(1)$-bundles are replaced by $U(1)$-gerbes (as observed by Brylinski \cite{bry}, reinterpreting Gawędzki's approach via smooth Deligne cohomology \cite{gaw}). Exact CAs are thus closely related to $U(1)$-gerbes.
\end{rem}

\begin{rem}\label{rem:lagr}
If $\Sigma$ is a surface with pseudo-conformal structure, with local light-like coordinates $t_1,t_2$, and $r\in\Gamma({T^*}^{\otimes2}M)$ is a tensor field on $M$, let us consider the standard $\sigma$-model action functional on maps $f:\Sigma\to M$,
$$S_r[f]=\int_\Sigma r\Bigl(\frac{\partial f}{\partial t_1},\frac{\partial f}{\partial t_2}\Bigr)\, dt_1\,dt_2.$$
 Let $E$ be the standard CA and $ R\subset E$ be the graph of $TM\to T^*M$, $v\mapsto r(v,\cdot)$ (then $ R^\perp$ is the graph of $v\mapsto -r(\cdot, v)$). For any $f:\Sigma\to M$ let us lift $Tf:T\Sigma\to TM$ to $\tilde Tf:T\Sigma\to E$ by requiring $\tilde Tf(\partial_{t_1})\in R$ and $\tilde Tf(\partial_{t_2})\in R^\perp$. In this case Noether theorem says:
 
   \emph{If $ R$ is invariant under the flow of $Z_s$ for some $s\in\Gamma(E)$, and if $f:\Sigma\to M$ is critical for the functional $S_r$, then $d\bigl(f^{\tilde*}\la s,\cdot\ra\bigr)=0$. } 

  Proposition \ref{prop:crit} becomes trickier and we don't state it here (see \cite[Section 5]{some}, where it is formulated in terms of differential graded manifolds).
A similar picture can be found for any Lagrangian density depending on the first derivatives of $f$.
\end{rem}

\section{Equivariant Courant algebroids and their reduction}

Let $\mf g$ be a Lie algebra, $E\to M$ a Courant algebroid, and $\rho:\mf g\to\Gamma(E)$ a $[,]$-preserving linear map. The functions $\la\rho(\xi),\rho(\eta)\ra\in C^\infty(M)$, $\xi,\eta\in\g$, are constant (provided $M$ is connected), as
$$0=\rho([\xi,\eta]+[\eta,\xi])=[\rho(\xi),\rho(\eta)]+[\rho(\eta),\rho(\xi)]=d\la\rho(\xi),\rho(\eta)\ra.$$
The resulting (possibly degenerate) pairing $\la\rho(\xi),\rho(\eta)\ra$ on $\mf g$ is $ad$-invariant. This leads us to the following definition.

\begin{defn}
Let $\g$ be a Lie algebra and $\la,\ra_\g$ an invariant symmetric bilinear pairing on $\g$ (possibly degenerate). If $E$ is a CA, a \emph{representation} of $(\g,\,\la,\ra_\g)$ in $E$ is a linear map $\rho:\mf g\to\Gamma(E)$ such that $\rho([\xi,\eta])=[\rho(\xi),\rho(\eta)]$ and $\la\rho(\xi),\rho(\eta)\ra=\la\xi,\eta\ra_\g$.
\end{defn}
A representation of $\g$ in $E$ gives us an action of $\g$ on $E$ by $Z_{\rho(\xi)}$'s. If $G$ is a connected Lie group with the Lie algebra $\g$, and the action of $\g$ on $E$ gives rise to an action of $G$ in $E$, we shall say that $A$ is a \emph{$G$-equivariant CA}.

\begin{rem}
If exact CAs $E\to M$ are seen as approximations of $U(1)$-gerbes over $M$ then the ``gerby'' version of a $G$-equivariant exact CA $E\to M$ is a multiplicative gerbe over $G$ acting on a gerbe over $M$. In this context it's best to replace exact CAs $E\to M$ with principal $\R[2]$-bundles $X\to T[1]M$ in the category of differential graded manifolds. Multiplicative gerbes over $G$ are approximated by central extensions of the group  $T[1]G$ by $\R[2]$, and such extensions are classified by invariant symmetric bilinear forms on $\g$. See \cite[section 3]{some} for some details.
\end{rem}

It is easy to see that $\g$-invariant sections of $E$, orthogonal to the image of $\g$ in $E$, are closed under the bracket $[,]$. This gives us, after we mod out by the sections which are in the kernel of $\la,\ra$, the following theorem.

\begin{thm}[reduction of CAs]
Let $E\to M$ be an $G$-equivariant CA. Suppose that the  action  of $G$ on $M$ is free and proper. For any $x\in M$ let 
$$ (E_{/G})_x=(\rho_x(\g))^\perp/(\rho_x(\g)^\perp\cap\rho_x(\g))=(\rho_x(\g))^\perp/\rho_x(\g'),$$
where $\g'\subset\g$ is the kernel of $\la,\ra_\g$.
After factorization by the action of $G$, $E_{/G}$ becomes a vector bundle over $M/G$. The CA structure on $E$ descends to a CA structure on $E_{/G}$. If $E$ is exact and $\la,\ra_\g=0$ (and thus $\g'=\g$) then $E_{/G}$ is exact.
\end{thm}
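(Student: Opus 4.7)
The plan is to build $E_{/G}$ as a vector bundle, descend the pairing/anchor/bracket, and verify exactness in the special case. For the vector bundle claim, freeness of the $G$-action on $M$ forces $a\circ\rho:\g\to\Gamma(TM)$ to be pointwise injective, hence so is $\rho_x:\g\to E_x$ for every $x$. Thus $\rho(\g)$ is a subbundle of rank $\dim\g$, its orthogonal $\rho(\g)^\perp$ is a subbundle, and since the pairing restricted to $\rho(\g)$ is the pullback of $\la,\ra_\g$, the intersection $\rho(\g)\cap\rho(\g)^\perp$ equals $\rho(\g')$. Both $\rho(\g)^\perp$ and $\rho(\g')$ are $G$-equivariant, so freeness and properness of the action let $E_{/G}$ descend to a vector bundle over $M/G$.

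Sections of $E_{/G}$ correspond to $\g$-invariant sections of $\rho(\g)^\perp/\rho(\g')$ over $M$. The pairing descends because $\rho(\g')$ is exactly the kernel of $\la,\ra|_{\rho(\g)^\perp}$. The anchor descends because $a\rho(\g)$ is spanned by the fundamental vector fields, which project to zero on $M/G$. For the bracket I would rely on the observation flagged just before the theorem: if $s,t\in\Gamma(\rho(\g)^\perp)$ are $\g$-invariant, then $[s,t]$ is $\g$-invariant by axiom \ref{ax1} and still orthogonal to $\rho(\g)$---indeed, invariance gives $[\rho(\xi),s]=0$, axiom \ref{ax5} then gives $[s,\rho(\xi)]=0$ (since $\la s,\rho(\xi)\ra=0$), and axiom \ref{ax4} forces $\la[s,t],\rho(\xi)\ra=0$. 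Well-definedness modulo $\rho(\g')$ reduces to showing $[\sigma,t]\in\Gamma(\rho(\g'))$ for any $\sigma\in\Gamma(\rho(\g'))$; writing $\sigma=\sum f^i\rho(\xi_i)$ with $\xi_i\in\g'$, the derived Leibniz rule
$$[f\rho(\xi),t]=f[\rho(\xi),t]-(a(t)f)\rho(\xi)+\la\rho(\xi),t\ra\, a^t df=-(a(t)f)\rho(\xi)$$
makes this transparent, and axiom \ref{ax5} then gives $[t,\sigma]\in\Gamma(\rho(\g'))$ as well. The Courant axioms for $E_{/G}$ are then inherited directly from $E$.

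For exactness when $\la,\ra_\g=0$ one has $\g'=\g$ and $E_{/G}=\rho(\g)^\perp/\rho(\g)$, of rank $2\dim M-2\dim\g=2\dim(M/G)$. The induced $a^t:T^*(M/G)\hookrightarrow E_{/G}$ is injective because $a^t\alpha\in\rho(\g)$ would yield $a\rho(\xi)=a(a^t\alpha)=0$ and hence $\xi=0$ by freeness. For surjectivity of the reduced anchor onto $T(M/G)$, lift $v\in T_xM$ to $s\in E$ using exactness of $E$ and then add a term of the form $a^t\beta$ to enforce $s\in\rho(\g)^\perp$; this is possible because freeness makes the dual map $\beta\mapsto\beta\circ a\rho$ surjective onto $\g^*$. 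For exactness in the middle, if the reduced anchor kills $[s]$ then $a(s)$ is a fundamental vector, so $s\equiv\rho(\xi)+a^t\alpha\pmod{\rho(\g)}$ for some $\alpha$; orthogonality to $\rho(\g)$ then forces $\alpha$ to annihilate the fundamental vectors, i.e.\ $\alpha\in T^*(M/G)$.

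The main obstacle I expect is the bracket-descent step: juggling the $a^t d(\cdot)$ correction from axiom \ref{ax5} against the Leibniz anomaly of axiom \ref{ax3} requires some care, but once the three identities $[\rho(\xi),s]=0$, $[s,\rho(\xi)]=0$, and $[\rho(\xi),\rho(\eta)]\in\rho(\g')$ (for $\xi\in\g'$ and $\g$-invariant $s$ orthogonal to $\rho(\g)$) are in place, the verification of the axioms for $E_{/G}$ is a routine inheritance from $E$.
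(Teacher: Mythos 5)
Your proposal is correct and follows exactly the route the paper itself indicates (the paper gives only the one-sentence sketch preceding the theorem: invariant sections of $\rho(\g)^\perp$ are closed under the bracket, then one mods out by the radical $\rho(\g')$ of the pairing). Your write-up simply supplies the details the paper omits — the bracket-descent identities via axioms A, C, D, E and the rank/exactness check — and all of these check out.
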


\begin{rem}
This reduction procedure was rediscovered and further generalized in \cite{bur}.
\end{rem}

If $M\to M/G$ is a principal $G$-bundle (i.e.\ if $G$ acts freely and properly on $M$), let its \emph{Pontryagin class} be
$$ \bigl[\la F, F\ra_\g\bigr]\in H^4(M,\mathbb R)$$
where $F$ is the curvature of a connection on the principal bundle.

%Let now $\dd$ be a Lie algebra with non-degenerate $\la,\ra_\dd$ and $D$ a connected Lie group with Lie algebra $\dd$
%Poisson-Lie T-duality (Section \ref{sec:PL}) will be formulated in terms of a $D$-equivariant exact CA $E\to M$  such that the action of $D$ on $M$ is free and proper, i.e.\ such that the projection $M\to M/D$ is a principal $D$-bundle. If $M\to M/D$ is a principal $D$-bundle, let its \emph{Pontryagin class} be
%$$ \bigl[\la F, F\ra_\dd\bigr]\in H^4(M,\mathbb R)$$
%where $F$ is the curvature of a connection on the principal bundle.

\begin{thm}[classification of equivariant exact CAs]\label{thm:cl-equivCA}
If $G$ acts freely and properly on $M$ then $M$ admits a $G$-equivariant exact CA iff the Pontryagin class of the principal $G$-bundle $M\to M/G$ vanishes. 

There is a natural free and transitive action of the group $H^3(M/G,\mathbb R)$  on the set of isomorphism classes of $G$-equivariant CAs $E\to M$, where the class of a closed 3-form $\gamma\in\Omega^3_{cl}(M/G)$ acts by modifying the bracket on $E\to M$ via $$[s,t]_{new}=[s,t]+a^t\bigl(p^*\gamma(a(s),a(t),\cdot)\bigr).$$
\end{thm}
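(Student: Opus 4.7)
The plan is to classify $G$-equivariant exact CAs by bringing them to a normal form adapted to a fixed principal connection on $p:M\to M/G$, and then reading off the obstruction and the torsor structure from this normal form. Choose a $G$-invariant principal connection $\theta\in\Omega^1(M,\g)^G$ (possible by averaging, since the action is free and proper), with curvature $F$ and Chern--Simons 3-form $CS(\theta)$ satisfying $d\,CS(\theta)=\langle F,F\rangle_\g$; since $\langle F,F\rangle_\g$ is $G$-basic, it equals $p^*P$ for a closed 4-form $P\in\Omega^4(M/G)$ representing the Pontryagin class. Given a $G$-equivariant exact CA $E\to M$, average to obtain an invariant Lagrangian splitting $L\subset E$, identifying $E\cong TM\oplus T^*M$ with the $H$-twisted bracket \eqref{eq:exH} for some closed $G$-invariant $H\in\Omega^3(M)$, and write $\rho(\xi)=(u_\xi,\alpha_\xi)$. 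The representation axioms become $\alpha_\xi(u_\eta)+\alpha_\eta(u_\xi)=\langle\xi,\eta\rangle_\g$ (pairing) and $\alpha_{[\xi,\eta]}=L_{u_\xi}\alpha_\eta-i_{u_\eta}d\alpha_\xi+i_{u_\eta}i_{u_\xi}H$ (bracket); splitting changes act by $(H,\alpha_\xi)\mapsto(H+d\tau,\alpha_\xi-i_{u_\xi}\tau)$ for $\tau\in\Omega^2(M)^G$.

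The key computation is that the canonical choice $\alpha_\xi^\circ:=\tfrac12\langle\theta,\xi\rangle_\g$ solves the pairing equation automatically, and using ad-invariance of $\langle,\rangle_\g$ together with $d\theta=F-\tfrac12[\theta,\theta]$ one checks that $L_{u_\xi}\alpha_\eta^\circ=\alpha_{[\xi,\eta]}^\circ$, $-i_{u_\eta}d\alpha_\xi^\circ=\alpha_{[\xi,\eta]}^\circ$ (using horizontality of $F$), and $i_{u_\eta}i_{u_\xi}CS(\theta)=-\langle\theta,[\xi,\eta]\rangle_\g=-2\alpha_{[\xi,\eta]}^\circ$. Hence for $\alpha=\alpha^\circ$ the bracket equation reduces to $i_{u_\eta}i_{u_\xi}\bigl(H-\tfrac12 CS(\theta)\bigr)=0$. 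I would then use the gauge freedom to bring every $(H,\alpha)$ to the normal form $\alpha=\alpha^\circ$ and $H-\tfrac12 CS(\theta)=p^*\tilde H$ basic: the deviation $\alpha-\alpha^\circ$ is antisymmetric when contracted with verticals (since the pairing axiom holds for both $\alpha$ and $\alpha^\circ$), hence equals $i_{u_\cdot}\tau$ for a suitable $\tau$; the remaining non-basic part of $H-\tfrac12 CS(\theta)$ is then absorbed by a further basic 2-form shift, invoking $H^*_G(M)\cong H^*(M/G)$ for the free, proper action.

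In this normal form, $dH=0$ reads $d\tilde H=-\tfrac12 P$ on $M/G$, so a $G$-equivariant exact CA exists iff $P$ admits a primitive, i.e., iff the Pontryagin class vanishes; conversely any primitive $\tilde H$ produces one via $H=\tfrac12 CS(\theta)+p^*\tilde H$, $\rho(\xi)=(u_\xi,\alpha_\xi^\circ)$. Two primitives $\tilde H_1,\tilde H_2$ yield equivariantly isomorphic CAs iff $\tilde H_2-\tilde H_1$ is exact on $M/G$: any equivariant isomorphism intertwining the canonical $\alpha^\circ$'s must be a $B$-field transformation by a basic 2-form $p^*\tilde B$, shifting $\tilde H_1$ by $d\tilde B$. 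Hence the set of isomorphism classes is a torsor under $H^3(M/G,\R)$; the class $[\gamma]$ acts by $\tilde H\mapsto\tilde H+\gamma$, equivalently $H\mapsto H+p^*\gamma$, which on brackets is exactly the stated modification $[s,t]_{new}=[s,t]+a^t\bigl(p^*\gamma(a(s),a(t),\cdot)\bigr)$. The main technical obstacle is the normal-form reduction --- verifying that the combined gauge freedom (invariant splittings and basic $B$-fields) is exactly large enough to kill the unwanted vertical--vertical and vertical--horizontal components of $H-\tfrac12 CS(\theta)$ while leaving precisely the $H^3(M/G,\R)$-worth of ambiguity.
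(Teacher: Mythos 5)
Your overall strategy is the same as the paper's: fix a principal connection $A$, choose a $G$-invariant Lagrangian splitting, normalize the representation to $\rho(\xi)=\bigl(\xi_M,\tfrac{1}{2}\la\xi,A\ra_\g\bigr)$, and identify $H$ modulo the Chern--Simons form $\mathsf{cs}$ with a basic $3$-form $\theta$ satisfying $d\theta=\tfrac12\la F,F\ra_\g$, from which both the existence criterion and the $H^3(M/G,\R)$-torsor structure follow. Your verification that $\alpha^\circ_\xi=\tfrac12\la A,\xi\ra_\g$ solves the pairing condition, and your reduction of the deviation $\alpha-\alpha^\circ$ to a $2$-form shift, are fine (modulo sign conventions for $\mathsf{cs}$, which do not matter).

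There is, however, a genuine gap at exactly the step you flag as the ``main technical obstacle,'' and the mechanism you propose for it cannot work. From the bracket-compatibility of $\rho$ alone you only obtain the double-vertical condition $i_{\xi_M}i_{\eta_M}\bigl(H\mp\tfrac12\mathsf{cs}\bigr)=0$; this says nothing about the contraction of $H\mp\tfrac12\mathsf{cs}$ with one vertical and two horizontal vectors. You propose to kill that remaining non-basic part ``by a further basic $2$-form shift,'' but a shift of the splitting by $\tau=p^*\tau_0$ changes $H$ by $p^*d\tau_0$, which is itself basic, so it cannot remove any non-basic component. The missing ingredient is the $G$-invariance of the Lagrangian splitting $L$ itself, i.e.\ the requirement that the flows of $Z_{\rho(\xi)}$ preserve $L$: writing out $[\rho(\xi),(u,0)]\in\Gamma(TM)$ with the bracket \eqref{eq:exH} gives the \emph{full} single-contraction condition $i_{\xi_M}H=-d\alpha_\xi$ (the paper's equation \eqref{Hequiv}), not just its restriction to verticals. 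Once $\alpha=\alpha^\circ$, this reads $i_{\xi_M}\bigl(H+\tfrac12\mathsf{cs}\bigr)=0$, and since $H+\tfrac12\mathsf{cs}$ is also $G$-invariant it is basic with no further gauge-fixing at all; the only residual freedom is then the basic $2$-form shift $\theta\mapsto\theta+d\tau_0$, which is precisely what makes the classification land in $H^3(M/G,\R)$. So your argument is repaired not by enlarging the gauge group but by using an equivariance condition you had already imposed (the invariant splitting) and then forgot to exploit.
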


\begin{proof}
Let us choose a connection $A\in\Omega^1(M,\g)$ on the principal $G$-bundle $M\to M/ G$. If $E\to M$ is a $G$-equivariant exact CA, we can choose a $G$-invariant 
Lagrangian splitting $E\cong (T\oplus T^*)M$ (i.e.\ a connection $L\subset E$), such that 
\begin{equation}\label{rhoA}
\rho(\xi)=\bigl(\xi_M,\frac{1}{2}\la\xi, A\ra_\g\bigr)\in\Gamma\bigl((T\oplus T^*)M\bigr)\quad(\forall\xi\in\g),
\end{equation}
where $\xi_M=a(\rho(\xi))$ is the vector field on $M$ given by $\xi\in\g$.
The bracket on $E\cong (T\oplus T^*)M$ is now given by \eqref{eq:exH} for some $G$-invariant closed 3-form $H\in\Omega^3_{cl}(M)$.

 $G$-invariance of the splitting $E\cong (T\oplus T^*)M$ means that for every vector field $u$ on $M$ and any $\xi\in\g$ the section
 $\bigl[\rho(\xi),(u,0)\bigr]$ of $E$ is a vector field (i.e.\ its 1-form part is zero). Equivalently 
\begin{equation}\label{Hequiv}
i_{\xi_M}H=-\frac{1}{2}\la\xi,d A\ra_\g.
\end{equation}
Equation \eqref{Hequiv} also ensures that $\rho$ is a representation of $\g$ in $E$.

The Chern-Simons 3-form $\mathsf{cs}=\la A,d A\ra_\g+\frac{1}{3}\la[ A, A], A\ra_\g$ satisfies (up to a factor) the same equation
$$i_{\xi_M}\mathsf{cs}=\la\xi,d A\ra_\g,$$
and $d\,\mathsf{cs}=p^*\la F,F\ra_\g$, where $p:M\to M/G$ is the projection and $F$ the curvature of $A$. As a result, the general solution of \eqref{Hequiv} is 
$$H=p^*\theta-\mathsf{cs}/2,\ d\theta=\frac{1}{2}\la F,F\ra_\g.$$
If we change the splitting of $E$ by a 2-form $\tau\in\Omega^2(M/G)$  then $H$ gets replaced by $H+p^*d\tau$, i.e.\ $\theta$ by $\theta+d\tau$. $G$-equivariant CAs over $M$ are thus classified by solutions of $d\theta=\frac{1}{2}\la F,F\ra_\g$ modulo exact 3-forms.
\end{proof}

As an  application of the reduction procedure, 
let us now describe a construction/classification of transitive CAs, i.e. of CAs with surjective anchors.
If $\tilde E\to N$ is a transitive CA with anchor $\tilde a:\tilde E\to TN$
 then $B:=\tilde E/\on{Im} \tilde a^t$ is a transitive Lie algebroid with an invariant inner product on the bundle of vertical Lie algebras.
 \begin{thm}[exact equivariant vs.\ transitive CAs]\label{thm:TCA}
If $M\to N=M/D$ is a principal $D$-bundle  and  $\la,\ra_\dd$ is non-degenerate then the reduction by $D$ gives an equivalence between $D$-equivariant exact CAs $E\to M$ and transitive CAs $\tilde E\to N$ such that $\tilde E/\on{Im} \tilde a^t=(TM)/D$.
\end{thm}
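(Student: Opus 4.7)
The plan is to establish that the reduction functor of Section~4 sends $D$-equivariant exact CAs on $M$ to transitive CAs on $N$ of the stated form, and then to invoke the classification of Theorem~\ref{thm:cl-equivCA} together with a parallel classification on the transitive side to get essential surjectivity and uniqueness.

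Given a $D$-equivariant exact CA $E\to M$, the reduction theorem applied with non-degenerate $\la,\ra_\dd$ (so $\dd'=0$) produces $\tilde E:=E_{/D}$ with fibre $(E_{/D})_{[x]}=\rho_x(\dd)^\perp/D$. To check transitivity, I lift $v\in T_yN$ to $\hat v\in T_xM$ and choose $e\in E_x$ with $a(e)=\hat v$ (possible by exactness of $E$); since $a\circ a^t=0$, modifying $e$ by any $a^t\alpha$ preserves $a(e)$, and the map $\alpha\mapsto(\xi\mapsto\alpha(\xi_M))$ from $T_x^*M$ to $\dd^*$ is surjective (because $D$ acts freely), so I can arrange $\la e,\rho(\xi)\ra=0$ for all $\xi\in\dd$, i.e.\ $e\in\rho_x(\dd)^\perp$. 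For the quotient identification, the map $[s]\mapsto[a(s)]$ defines a surjection $\tilde E\to(TM)/D$; its kernel consists of classes of $s\in\rho(\dd)^\perp$ with $a(s)$ vertical, hence, after subtracting an element of $\rho(\dd)$, of $s=a^t\alpha$ with $\alpha$ basic, $\alpha\in p^*T^*N$. The relation $\tilde a^t\beta=[a^t(p^*\beta)]$ (which follows from $\la a^t(p^*\beta),s\ra_E=(p^*\beta)(a(s))=\beta(\tilde a[s])$) identifies this kernel with $\on{Im}\tilde a^t$, and a dimension count closes the argument.

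For essential surjectivity, starting from a transitive CA $\tilde E\to N$ with $\tilde E/\on{Im}\tilde a^t=(TM)/D$, I would choose a principal connection $A\in\Omega^1(M,\dd)$ with curvature $F$ and a splitting of the quotient $\tilde E\to(TM)/D$, and extract from them a closed 3-form $\theta\in\Omega^3(N)$ satisfying $d\theta=\tfrac12\la F,F\ra_\dd$. Then $E:=(T\oplus T^*)M$ with bracket~\eqref{eq:exH} determined by $H=p^*\theta-\mathsf{cs}/2$ and representation~\eqref{rhoA} is a $D$-equivariant exact CA whose reduction is $\tilde E$ by construction. The hard part will be checking independence from the auxiliary choices and functoriality of the inverse. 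The cleanest route is to compare classifications: Theorem~\ref{thm:cl-equivCA} parametrizes equivariant exact CAs by solutions $\theta$ of $d\theta=\tfrac12\la F,F\ra_\dd$ modulo exact 3-forms on $N$; a parallel analysis shows transitive CAs with the prescribed quotient are parametrized by the same $H^3(N,\R)$-torsor with the same obstruction class $[\la F,F\ra_\dd]/2\in H^4(N,\R)$, and the reduction functor intertwines these parametrizations.
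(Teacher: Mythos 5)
Your forward direction is essentially sound and in fact more detailed than the paper, which disposes of it in one sentence (``follows from the definition of $E_{/D}$''); the only slip is in your kernel computation. The class of $a(s)$ in the Atiyah algebroid $(TM)/D$ vanishes iff $a(s)=0$, not merely iff $a(s)$ is vertical (the fibre of $(TM)/D$ over $[x]$ is all of $T_xM$), and since $\la,\ra_\dd$ is non-degenerate you have $\rho(\dd)\cap\rho(\dd)^\perp=0$, so you cannot ``subtract an element of $\rho(\dd)$'' and remain in $\rho(\dd)^\perp$. Fortunately the correct kernel condition $a(s)=0$ leads directly to $s=a^t\alpha$ with $\alpha$ semi-basic, which is what you wanted, so this part is repairable on the spot.

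The genuine gap is in the inverse direction. Your fallback argument rests on the assertion that ``a parallel analysis shows'' that transitive CAs with prescribed quotient $(TM)/D$ form an $H^3(N,\R)$-torsor with obstruction $\bigl[\la F,F\ra_\dd\bigr]$; but that is precisely the classification of transitive Courant algebroids, a nontrivial computation the paper deliberately does \emph{not} perform --- the remark following the theorem derives that classification as a \emph{corollary} of Theorems \ref{thm:cl-equivCA} and \ref{thm:TCA}, so your plan reverses the paper's logical order and leaves the hard step unproved. Your primary construction has the same problem: to write $H=p^*\theta-\mathsf{cs}/2$ you must first extract $\theta$ with $d\theta=\frac{1}{2}\la F,F\ra_\dd$ from the given $\tilde E$, which again requires analyzing the structure of a general transitive CA. The paper avoids all of this with an explicit, choice-free inverse: given $\tilde E\to N$, set $E:=p^*\tilde E\oplus\dd$ over $M$, with anchor the sum of the projection $p^*\tilde E\to p^*\bigl((TM)/D\bigr)=TM$ and the infinitesimal action $\dd\times M\to TM$, pairing the direct sum of the two pairings, bracket determined by $[p^*s,p^*t]=p^*[s,t]_{\tilde E}$, $[\xi,\eta]=[\xi,\eta]_\dd$, $[p^*s,\xi]=0$, and $\rho(\xi)=(0,\xi)$. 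Then $\rho(\dd)^\perp=p^*\tilde E$ and $E_{/D}=\tilde E$ on the nose, with no connection, no Chern--Simons form, and no classification needed. If you insist on your route, you must supply the direct classification of transitive CAs with fixed quotient Lie algebroid and additionally verify that the reduction functor itself (not just some abstract bijection of isomorphism classes) intertwines the two parametrizations.
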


\begin{proof}
If $E\to M$ is a $D$-equivariant CA then the fact that $\tilde E:=E_{/D}$ is transitive and $\tilde E/\on{Im} \tilde a^t=(TM)/D$ follows from the definition of $E_{/D}$.

If $\tilde E\to N$ is a transitive CA such that $\tilde E/\on{Im} \tilde a^t=(TM)/D$ then we can (re)construct a $D$-equivariant exact $E\to M$ as follows.
Let 
$$E:=p^* \tilde E\oplus\dd,$$
with the following structure: the anchor
$$a_{E}:p^* \tilde E\oplus\dd\to TM$$
is the sum of the projection $p^* \tilde E\to p^*B=TM$ and of the natural map $\dd\times M\to TM$, the pairing $\la,\ra_{E}$ is the direct sum of the pairings on $\tilde E$ and on $\dd$, and the bracket is given by
$$[p^*s,p^*t]_{E}=p^*[s,t]_{\tilde E},\ [\xi,\eta]_{E}=[\xi,\eta]_\dd,\ [p^*s,\xi]_{E}=0$$
 for all $s,t\in\Gamma(\tilde E)$, $\xi,\eta\in\dd$.

\end{proof}

\begin{rem}
If $B\to N$ is an arbitrary transitive Lie algebroid  with invariant inner product on its vertical Lie algebras then transitive CAs $\tilde E\to N$ such that $\tilde E/\tilde a^t=B$ exist iff the Pontryagin class of $B$ vanishes; in this case $H^3(N,\mathbb R)$ acts freely and transitively on their isomorphism classes (isomorphisms which are identity on $B$). If $B=(TM)/D$ as above then this result follows from Theorems \ref{thm:cl-equivCA} and \ref{thm:TCA}. For general $B$ it can be proven by a direct calculation; we don't need this result here, so we refer the reader to \cite[no.4]{letters}. This result was rediscovered and extended to regular CAs in \cite{xu}.
\end{rem}

\section{Reduction and curvature}
In this section $\dd$ is a Lie algebra with a non-degenerate invariant symmetric pairing $\la,\ra_\dd$ and $D$ a connected Lie group with Lie algebra $\dd$.

Let $D$ act freely and properly on a manifold $M$ and let $E\to M$ be an equivariant  CA. Let $E_{/D}\to M/D$ be the reduction of $E$; we have $p_D^*E_{/D}=\rho(\dd)^\perp\subset E$, where $p_D:M\to M/D$ is the projection.

Let $ L_D\subset E_{/D}$ be a Lagrangian subbundle. Then
$$L:=p_D^* L_D\subset\rho(\dd)^\perp\subset E$$
is a $D$-invariant Lagrangian subbundle of $\rho(\dd)^\perp$; any $D$-invariant Lagrangian subbundle of $\rho(\dd)^\perp$ is of this form. 

 We  define
$$\mc F_L:\textstyle\bigwedge^2L\to (\rho(\dd)^\perp/L)\cong L^*,\quad \mc F_L(s,t)=[s,t]\text{ mod }L\quad(\forall s,t\in\Gamma(L))$$
and
$$\mc H_L\in\Gamma(\wedge^3L^*),\quad \mc H_L(s,t,u)=\la [s,t],u\ra\quad(\forall s,t,u\in\Gamma(L))$$
(a quick inspection shows that $\mc F_L$ and $\mc H_L$ are well defined). Notice (by using $D$-invariant sections in the definition of $\mc F_L$ and $\mc H_L$) that 
$$\mc F_L=p_D^*\mc F_{ L_D}\text{ and }\mc H_L=p_D^*\mc H_{ L_D}.$$

Let now $G\subset D$ be a Lagrangian subgroup (i.e.\ its Lie algebra $\g$ is a Lagrangian subspace of $\dd$, or equivalently $(\dd,\g)$ is a Manin pair). Let us consider the reduced CA $E_{/G}\to M/G$. We  have a natural identification $E_{/G}\cong\rho(\dd)^\perp/G$ (as $\rho(\g)^\perp=\rho(\g)\oplus\rho(\dd)^\perp$  and thus $\rho(\g)^\perp/\rho(\g)\cong\rho(\dd)^\perp$). Let us define a Lagrangian subbundle $L_G\subset E_{/G}$ to be $L_G=L/G$ (i.e.\ $L=p_G^*L_G$). By using $G$-invariant sections of $L$ we get
\begin{equation}\label{fcfck}
\mc F_L=p_G^*\mc F_{L_G}\text{ and }\mc H_L=p_G^*\mc H_{L_G},
\end{equation}
where $p_G:M\to M/G$ is the projection. As a result we have

\begin{prop}\label{prop:rel}
$p_D^*\mc F_{ L_D}=p_G^*\mc F_{L_G}$ and $ p_D^*\mc H_{ L_D}=p_G^*\mc H_{L_G}$. In particular, $L_G\subset E_{/G}$ is a Dirac structure iff $ L_D\subset E_{/D}$ is a Dirac structure.
\end{prop}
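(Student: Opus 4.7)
The plan is to observe that the proposition is a one-line concatenation of the two pullback identities that have just been derived. The first identity $\mc F_L = p_D^*\mc F_{L_D}$ (and its $\mc H$-analogue) comes from evaluating $\mc F_L$ on $D$-invariant sections of $L$: since $L = p_D^*L_D$, such sections project to sections of $L_D$, and the bracket on $E$ restricted to $\rho(\dd)^\perp$ descends to the bracket on $E_{/D}$. The second identity is precisely \eqref{fcfck}, obtained analogously using $G$-invariant sections. So I would simply write
\[
p_D^*\mc F_{L_D} \;=\; \mc F_L \;=\; p_G^*\mc F_{L_G},
\]
and the same chain verbatim for $\mc H$ in place of $\mc F$.

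For the ``in particular'' clause, the plan is to invoke injectivity of pullback of tensors along surjective submersions. Both $p_D:M\to M/D$ and $p_G:M\to M/G$ are surjective submersions, so $\mc F_{L_D}=0$ is equivalent to $p_D^*\mc F_{L_D}=0$, and likewise on the $G$-side; combining this with the equality proved above gives that $\mc F_{L_D}=0\iff \mc F_{L_G}=0$. Recalling from Section~2 that a Lagrangian subbundle of a Courant algebroid is a Dirac structure iff its $\mc F$ (equivalently $\mc H$) vanishes, the equivalence of the two Dirac conditions follows immediately.

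There is essentially no obstacle here --- the real work was already absorbed in verifying the two preparatory pullback identities. The only point worth a moment's attention is bookkeeping of target bundles: using the identifications $p_D^*(E_{/D}) = \rho(\dd)^\perp$ and $p_G^*(E_{/G}) = \rho(\dd)^\perp$ together with $L = p_D^*L_D = p_G^*L_G$, one checks that $p_D^*\mc F_{L_D}$ and $p_G^*\mc F_{L_G}$ are both sections of $\bigwedge^2 L^*\otimes(\rho(\dd)^\perp/L)$ over $M$, so the equality makes literal sense rather than requiring an additional canonical identification.
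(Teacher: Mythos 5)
Your proposal is correct and follows exactly the paper's route: the proposition is stated there immediately after the two pullback identities (the $D$-invariant-sections computation and \eqref{fcfck}) with the words ``as a result we have,'' so the content is precisely the concatenation $p_D^*\mc F_{L_D}=\mc F_L=p_G^*\mc F_{L_G}$ you wrote down. Your added remarks on injectivity of pullback along the surjective submersions and on the identification of target bundles are accurate and only make explicit what the paper leaves implicit.
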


Let us now suppose in addition that $E$ is exact (which implies that $E_{/G}$ is exact) and that its anchor $a:E\to TM$ is injective on $L\subset E$. 
Let $$V:=a(L)\subset TM.$$
 Notice that $$\on{rank}V=\on{rank}L=\dim M-\frac{1}{2}\dim\dd=\dim M/G.$$
The non-involutivity of the distribution $V\subset TM$ is measured by its curvature
$$F_V:\textstyle\bigwedge^2V\to TM/V,\quad F_V(u,v)=[u,v]\text{ mod }V\quad(\forall u,v\in\Gamma(V)).$$
From definitions we get that
\begin{equation}\label{fvfc}
F_V\bigl(a(s),a(t)\bigr)=a\bigl( \mc F_L(s,t)\bigr)\quad\forall s,t\in\Gamma(L).
\end{equation}

\section{Non-Abelian conservation laws and Poisson-Lie T-duality}\label{sec:PL}

Poisson-Lie T-duality is a geometric version of ``non-Abelian Noether theorem'', where a symmetry gives rise to a flat connection instead of a closed 1-form, and also an equivalence between two (or more) variational problems. It was introduced in \cite{ks}. The idea of exact CAs was extracted from this T-duality; the following is a ``coordinate-free'' interpretation of Poisson-Lie T-duality in terms of exact CAs.

Let us use the setup and notation of the previous section: $E\to M$ is a $D$-equivariant exact CA (the action of $D$ on $M$ is free and proper), $L\subset\rho(\dd)^\perp$ is a Lagrangian $D$-invariant subbundle such that the anchor $a$ is injective on $L$, and  $G\subset D$ is a Lagrangian subgroup.

 The Lagrangian subbundle $L_G\subset E_{/G}$ is a connection iff $V:=a(L)\subset TM$ is transverse to the fibers of the projection $M\to M/G$. Supposing this (or removing the points where transversality fails), let 
$$H_G\in\Omega_{cl}^3(M/G)$$
be the curvature of the connection $L_G\subset E_{/G}$ and let $A_G\in\Omega^1(M,\g)$ be the connection on the principal $G$-bundle $p_G:M\to M/G$ whose horizontal bundle is $V\subset TM$.

\begin{thm}[``non-Abelian Noether'']\label{thm:nabn}
If $f:\Sigma\to M/G$ is $H_G$-critical then $f^* A_G$ is a flat connection on the principal $G$-bundle $f^*M\to\Sigma$.
\end{thm}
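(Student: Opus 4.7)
The plan is to verify that the curvature 2-form of the pulled-back connection $f^*A_G$ on the principal $G$-bundle $\tilde\Sigma:=f^*M\to\Sigma$ vanishes identically. Writing $\tilde f:\tilde\Sigma\to M$ for the canonical horizontal map in the pullback square, naturality of curvature gives $F_{f^*A_G}=\tilde f^*F_{A_G}$, where $F_{A_G}\in\Omega^2(M,\g)$ is the curvature of $A_G$. Since $F_{A_G}$ is horizontal and $\tilde f$ is $G$-equivariant (mapping fibres of $\tilde\Sigma\to\Sigma$ isomorphically to fibres of $M\to M/G$), it is enough to check that $\tilde f^*F_{A_G}$ vanishes on pairs of vectors that are horizontal for $f^*A_G$.

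First I would unpack the Euler--Lagrange assumption. Applying Proposition \ref{prop:crit} to the exact CA $E_{/G}$ with its connection $L_G$ (whose curvature is $H_G$), the hypothesis that $f$ is $H_G$-critical becomes $\mc F_{L_G}(\tilde Tf(X_1),\tilde Tf(X_2))=0$ for every $\sigma\in\Sigma$ and $X_1,X_2\in T_\sigma\Sigma$. Next I would lift this identity to $M$: for any $m\in M$ with $p_G(m)=f(\sigma)$, the identification $L=p_G^*L_G$ yields a unique lift $s_i\in L_m$ of each $\tilde Tf(X_i)$. The tensor identity $\mc F_L=p_G^*\mc F_{L_G}$ from (\ref{fcfck}) then gives $\mc F_L(s_1,s_2)=0$, whereupon (\ref{fvfc}) promotes this to $F_V(a(s_1),a(s_2))=0\in(TM/V)_m$.

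The remaining step is to match the distribution curvature $F_V$ with the principal-connection curvature $F_{A_G}$. Transversality of $V$ to the fibres of $p_G$ gives the splitting $TM=V\oplus\ker Tp_G$ and the trivialization $TM/V\cong\ker Tp_G\cong\g\times M$ via $\xi\mapsto\xi_M$; this is precisely the identification defining $A_G$. Under it, the standard formula $F_{A_G}(u,v)=-A_G([u,v])$ for horizontal $u,v\in V$ identifies $F_{A_G}$ with $-F_V$ on $V\times V$. By construction $a(s_i)\in V_m$ is the unique horizontal lift of $Tf(X_i)\in T_{f(\sigma)}(M/G)$, so if $\tilde X_i\in T_{(\sigma,m)}\tilde\Sigma$ are horizontal for $f^*A_G$ and project to $X_i$, then $\tilde f_*\tilde X_i=a(s_i)$ and $(\tilde f^*F_{A_G})(\tilde X_1,\tilde X_2)=F_{A_G}(a(s_1),a(s_2))=0$. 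Together with horizontality of $F_{A_G}$ this yields $\tilde f^*F_{A_G}=0$.

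The main obstacle is bookkeeping: keeping straight the chain of identifications $L\cong p_G^*L_G$, $\rho(\dd)^\perp/L\cong p_G^*(E_{/G}/L_G)$, and $TM/V\cong\g$, and invoking the standard relation between the curvature of a principal connection and the curvature of its horizontal distribution. Once these are assembled the theorem reduces to the chain $0=\mc F_{L_G}\circ\tilde Tf \leadsto\mc F_L\leadsto F_V\leadsto -F_{A_G}$, built from Proposition \ref{prop:crit} together with the identities (\ref{fcfck}) and (\ref{fvfc}).
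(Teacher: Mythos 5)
Your proposal is correct and follows exactly the paper's own route: the paper's proof is the one-line remark that the theorem ``follows immediately from Proposition \ref{prop:crit} and relations \eqref{fcfck} and \eqref{fvfc}'', and your chain $0=\mc F_{L_G}\circ\tilde Tf\leadsto\mc F_L\leadsto F_V\leadsto -F_{A_G}$ is precisely that argument with the implicit identifications (in particular $F_V$ versus the curvature of the principal connection $A_G$ on horizontal vectors) spelled out. Nothing is missing.
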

\begin{proof}
It follows immediately from Proposition \ref{prop:crit} and relations \eqref{fcfck} and \eqref{fvfc}
\end{proof}

Motivated by Proposition \ref{prop:crit}, we shall say that a map $\phi:\Sigma\to M$ is \emph{$L$-critical} if the tangent map $T\phi:T\Sigma\to TM$ can be lifted to a vector bundle map $\tilde T\phi:T\Sigma\to L$ (i.e.\ if the image of $T\phi$ is in $V=a(L)$) and if $\phi^{\tilde*}\mc F_L=0\in\Omega^2(\Sigma,\phi^*L^*)$. Notice that the action of $D$ on $M$ sends $L$-critical maps $\Sigma\to M$ to $L$-critical maps; by the following theorem, such maps are equivalent to $H_G$-critical maps $\Sigma\to M/G$.

\begin{thm}[Poisson-Lie T-duality]\label{thm:PL}
If $\phi:\Sigma\to M$ is $L$-critical then $p_G\circ \phi:\Sigma\to M/G$ is $H_G$-critical. If $\Sigma$ is 1-connected and $f:\Sigma\to M/G$ is $H_G$-critical then there is a $L$-critical map $\phi:\Sigma\to M$ such that $f=p_G\circ \phi$; the map $\phi$ is unique up to the action of $G$.

If $G'\subset D$ is another Lagrangian subgroup then lifting $H_G$-critical maps to $L$-critical and projecting them to $M/G'$ gives us an equivalence between $H_G$-critical maps $\Sigma\to M/G$ and $H_{G'}$-critical maps $\Sigma\to M/G'$.
\end{thm}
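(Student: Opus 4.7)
The plan is to handle the three assertions in order: (i) descent of an $L$-critical map to an $H_G$-critical map on $M/G$, (ii) lifting of an $H_G$-critical map when $\Sigma$ is 1-connected, and (iii) the resulting equivalence for different Lagrangian subgroups. The key technical input is the pullback identity $\mc F_L=p_G^*\mc F_{L_G}$ of \eqref{fcfck} combined with Proposition \ref{prop:crit}, which reinterprets criticality as a zero-curvature condition.

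For (i), let $\phi:\Sigma\to M$ be $L$-critical, with tangent lift $\tilde T\phi:T\Sigma\to L$. Since $L_G\subset E_{/G}$ is a connection, the corresponding lift $\tilde Tf$ of $T(p_G\circ\phi)$ is unique and equals the descent of $\tilde T\phi$. Therefore
$$(p_G\circ\phi)^{\tilde *}\mc F_{L_G}=\phi^{\tilde *}(p_G^*\mc F_{L_G})=\phi^{\tilde *}\mc F_L=0,$$
and Proposition \ref{prop:crit} gives that $p_G\circ\phi$ is $H_G$-critical.

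For (ii), suppose $\Sigma$ is 1-connected and $f:\Sigma\to M/G$ is $H_G$-critical. By Theorem \ref{thm:nabn}, $f^*A_G$ is a flat connection on the principal $G$-bundle $f^*M\to\Sigma$. Since $\Sigma$ is 1-connected, this flat bundle admits a parallel section, and any two parallel sections differ by the action of a constant element of $G$. Such a section is exactly a lift $\phi:\Sigma\to M$ with $p_G\circ\phi=f$ whose tangent map takes values in the horizontal distribution $V=a(L)$; the injectivity of $a$ on $L$ then yields a unique $\tilde T\phi:T\Sigma\to L$ lifting $T\phi$. Finally $\phi^{\tilde *}\mc F_L=f^{\tilde *}\mc F_{L_G}=0$, so $\phi$ is $L$-critical, and the $G$-ambiguity in the parallel section becomes precisely the stated $G$-ambiguity of $\phi$. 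The main technical step here is the existence and uniqueness of the parallel lift: it is the standard fact that a flat principal bundle over a 1-connected base is trivialized by horizontal sections up to the structure group, applied to $f^*M$ with connection $f^*A_G$.

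For (iii), given an $H_G$-critical $f:\Sigma\to M/G$, one uses (ii) to produce an $L$-critical lift $\phi:\Sigma\to M$ (unique up to $G$-action), and then applies (i) with $G'$ in place of $G$ to conclude that $p_{G'}\circ\phi:\Sigma\to M/G'$ is $H_{G'}$-critical (restricting, if necessary, to the open subset of $M$ where $V$ is also transverse to the fibers of $M\to M/G'$). Swapping the roles of $G$ and $G'$ reverses the procedure, and both directions produce the same $L$-critical lift up to the relevant group action, so the two assignments are mutually inverse, yielding the claimed equivalence.
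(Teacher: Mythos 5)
Your proposal is correct and follows essentially the same route as the paper: descent via Proposition \ref{prop:crit} together with the identity \eqref{fcfck}, lifting via the flat connection of Theorem \ref{thm:nabn} and parallel sections over a 1-connected base, and the equivalence by composing lift and projection. You merely spell out in more detail the steps the paper compresses (the horizontal-section argument and the transversality caveat for $G'$), which the paper handles implicitly.
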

\begin{proof}
If $\phi:\Sigma\to M$ is $L$-critical then $H_G$-criticality of $p_G\circ f$ follows from Proposition \ref{prop:crit} and from \eqref{fcfck}. If $\Sigma$ is 1-connected and $f:\Sigma\to M/G$ is $H_G$-critical then by Theorem \ref{thm:nabn} there is a map $\phi:\Sigma\to M$ such that $p_G\circ \phi=f$ and such that the image of $T\phi$ is in $V$ ($\phi$ is unique up to the action of $G$). Relation \eqref{fcfck} and $H_G$-criticality of $f$ then imply that $\phi$ is $L$-critical.
\end{proof}

The name ``Poisson-Lie T-duality'' comes from the case when $\g\cap\g'=0$, i.e.\ when $(\dd,\g,\g')$ is a Manin triple and $G$ and $G'$ a dual pair of Poisson-Lie groups.

\begin{rem}
If we start with a half-dimensional subbundle $ R_D\subset E_{/D}$ in place of $L_D$ (we don't suppose that $ R_D$ is Lagrangian) we obtain a subbundle $R_G\subset E_{/G}$. When we locally trivialize the exact CA $E_{/G}$, we get a $\sigma$-model as described in Remark \ref{rem:lagr}. Theorems \ref{thm:nabn} and \ref{thm:PL} remain valid (after the appropriate modification). It was for this type of models that Poisson-Lie T-duality was originally formulated in \cite{ks} (without using the language of CAs). We don't give details, as it's easier to pass to equivalent $\sigma$-models given by 2-forms / closed 3-forms.

This picture was used in \cite{cg} in the case of Abelian $D$ (without discussing critical maps etc.).
\end{rem}

\section{Dirac structures and boundary conditions (D-branes)}

Let us return to variational problems. Let $M$ be a manifold, $N\subset M$ a submanifold, and let us choose forms $\omega\in\Omega^2(M)$, $\alpha_N\in\Omega^1(N)$. If $\Sigma$ is a surface, let us consider the action functional
$$S[f]=\int_\Sigma f^*\omega+\int_{\partial\Sigma}f^*\alpha_N$$
defined on maps $f:\Sigma\to M$ mapping $\partial\Sigma$ to $N$. The Euler-Lagrange equations for critical $f$'s is now
$$
f^*i_u d\omega=0\text{ on }\Sigma,\quad f^*i_v(\omega|_N+d\alpha_N)=0\text{ on }\partial\Sigma
$$
for every vector fields $u$ on $M$ and $v$ on $N$.

More invariantly and generally, we choose a closed 3-form $H\in\Omega^3_{cl}(M)$ and a 2-form $\beta_N\in\Omega^2(N)$ such that $d\beta_N=H|_N$. Locally then we can find $\omega$ and $\alpha_N$ such that $H=d\omega$ and $\beta_N=\omega|_N+d\alpha_N$. The Euler-Lagrange relations now say
\begin{equation}\label{extr-b}
f^*i_u H=0\text{ on }\Sigma,\quad f^*i_v\beta_N=0\text{ on }\partial\Sigma.
\end{equation}
\begin{rem}
For quantization, to make formal sense of the path integral, the pair $(H,\beta_N)$ should be extended to a relative Cheeger-Simons differential character. This fact was discussed in the case of the WZW-model in \cite{ks2} and in full generality in \cite{zuc}.
\end{rem}

As in Section \ref{sec:var} let $L\subset E$ be the exact CA with connection whose curvature is $H$. Let $C\subset E$ be a Dirac structure. On any leaf $N\subset M$ of the distribution $a(C)\subset TM$ we then have a 2-form $\beta_N\in\Omega^2(N)$ such that $d\beta_N=H|_N$. We can thus use the Dirac structure $C$ to impose a boundary condition; the map $f$ is required to send each component of $\partial\Sigma$ to a leaf $N$, and critical maps are given by the Euler-Lagrange equation \eqref{extr-b}.

Proposition \ref{prop:crit} has now the following form.
\begin{prop}
A map $f:\Sigma\to M$ is critical with the boundary condition given by $C$ iff
$$f^{\tilde*}(\mc F_L)=0\ (\in\Omega^2\bigl(\Sigma,f^*(E/L)\bigr))$$
and 
$$(\tilde Tf)\bigl(T(\partial\Sigma)\bigr)\subset C.$$
\end{prop}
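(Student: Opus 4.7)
The plan is to split the biconditional into its bulk part (the equation on $\Sigma$) and its boundary part (the equation on $\partial\Sigma$ together with the implicit requirement that $f(\partial\Sigma)$ lie in a leaf of $a(C)$), and to handle each separately. The bulk condition $f^*(i_uH)=0$ for every vector field $u$ on $M$ is, by Proposition \ref{prop:crit}, exactly $f^{\tilde*}(\mc F_L)=0$, so no new argument is required on that side.

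For the boundary part, I would use the connection $L$ to identify $E\cong(T\oplus T^*)M$, under which $\tilde Tf(u)=(Tf(u),0)$. Over any leaf $N$ of the distribution $a(C)\subset TM$, the Lagrangian property of $C$ together with the surjection $C|_N\twoheadrightarrow TN$ induced by $a$ forces the kernel of this surjection to be the conormal bundle of $N$ inside $T^*M|_N$ (a short dimension count plus the pairing relation). Consequently $C|_N$ is uniquely encoded by a 2-form $\beta_N\in\Omega^2(N)$ via
$$(v,\xi)\in C|_N\quad\Longleftrightarrow\quad v\in TN\text{ and }\xi|_{TN}=i_v\beta_N.$$
The identity $d\beta_N=H|_N$ is the one already recorded at the end of Section 2 (consequence of closedness of $C$ under the bracket \eqref{eq:exH}), so I would simply quote it.

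Given this description, the inclusion $(Tf(u),0)\in C$ at a point $x\in\partial\Sigma$ is equivalent to $Tf(u)\in TN$ (for $N$ the leaf through $f(x)$) together with $i_{Tf(u)}\beta_N=0$ as a $1$-form on $TN$. Enforcing this for every $u\in T(\partial\Sigma)$ is in turn the same as saying that $f$ sends each connected component of $\partial\Sigma$ into a single leaf and that $f^*(i_v\beta_N)=0$ on that component for every vector field $v$ on $N$---exactly the boundary Euler--Lagrange condition in \eqref{extr-b}. The main (and indeed only) nontrivial step is the correspondence $C\leftrightarrow(N,\beta_N)$ with $d\beta_N=H|_N$; since this has already been stated in the note, the remainder of the proof is routine bookkeeping.
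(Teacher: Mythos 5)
Your argument is correct. The paper states this proposition without proof, and your route --- Proposition \ref{prop:crit} for the bulk equation, plus the identification of the Dirac structure $C$ over each leaf $N$ of $a(C)$ with the 2-form $\beta_N$ satisfying $d\beta_N=H|_N$ (the correspondence recorded at the end of Section 2), under which $(\tilde Tf)(T(\partial\Sigma))\subset C$ unwinds to exactly the boundary condition in \eqref{extr-b} --- is precisely the reformulation the paper intends, so there is nothing to add.
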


Let us now describe Dirac structures (and thus boundary conditions) compatible with Poisson-Lie T-duality. We shall use the setup of Section \ref{sec:PL}: a free and proper action of $D$ on $M$, a $D$-equivariant exact CA $E\to M$,  a Lagrangian $D$-invariant subbundle $L\subset\rho(\dd)^\perp$ such that $a$ is injective on $L$,  a Lagrangian subgroup $G\subset D$. This data gives us the connection $L_G$ in the exact CA $E_{/G}\to M/G$ with curvature $H_G\in\Omega^3_{cl}(M/G)$.

We can now use Proposition \ref{prop:rel} to describe Dirac structures (or boundary conditions) in $M/G$ compatible with Poisson-Lie T-duality. We start with a Dirac structure $C_D\subset E_{/D}$; by Proposition \ref{prop:rel} (using ``$C$'' in place of ``$L$'') it gives us a Dirac structure $C_G\subset E_{/G}$ and a $D$-invariant subbundle $C\subset\rho(\dd)^\perp\subset E$. 

If $f:\Sigma\to M/G$ is a $H_G$-critical map satisfying the boundary condition given by $C_G$ then its lift $\phi:\Sigma\to M$ (see Theorem \ref{thm:PL}) will satisfy
$$\tilde T\phi(T(\partial\Sigma)\bigr)\subset C$$
and thus, if $G'\subset D$ is another Lagrangian subgroup, the map $p_{G'}\circ\phi:\Sigma\to M/G'$ will be $H_{G'}$-critical (Theorem \ref{thm:PL}) and will satisfy the boundary condition given by $C_{G'}\subset E_{/G'}$.

\end{document}